\newtheorem{theo}{Theorem}[section]
\newtheorem{propo}[theo]{Proposition}
\newtheorem{coro}[theo]{Corollary}
\newfont{\nset}{msbm10}
\def\A{\mbox{\boldmath $A$}}
\def\B{\mbox{\boldmath $B$}} 
\def\C{\mbox{\boldmath $C$}}
\def\D{\mbox{\boldmath $D$}}
\def\I{\mbox{\boldmath $I$}}
\def\M{\mbox{\boldmath $M$}}
\def\O{\mbox{\boldmath $O$}}
\def\dist{\mathop{\rm dist}\nolimits}
\def\ecc{\mathop{\rm ecc}\nolimits}
\def\>{\mathop{\rightarrow}\nolimits}
\def\u{{\mbox {\boldmath $u$}}}
\def\v{\mbox{\boldmath $v$}}
\def\x{\mbox{\boldmath $x$}}
\def\vec0{\mbox{\bf 0}}
\def\ev{\mathop{\rm ev}\nolimits}
\def\sp{\mathop{\rm sp}\nolimits}
\begin{document}

\title{On Middle Cube Graphs}

\author{C. Dalf\'o$^\dag$, M.A. Fiol$^\dag$, M. Mitjana$^\ddag$ \\
$^\dag${\small Departament de Matem\`atica Aplicada IV}\\
$^\ddag${\small Departament de Matem\`atica Aplicada I}\\
{\small Universitat Polit\`ecnica de Catalunya}\\
{\small {\tt \{cdalfo,fiol\}@ma4.upc.edu}}\\
{\small {\tt margarida.mitjana@upc.edu}}\\
}
\date {\today}

\maketitle

\begin{abstract}
We study a family of graphs related to the $n$-cube. The middle cube
graph of parameter $k$ is the subgraph of $Q_{2k-1}$ induced by the
set of vertices whose binary representation has either $k-1$ or $k$
number of ones. The middle cube graphs can be obtained from the
well-known odd graphs by doubling their vertex set. Here we study
some of the properties of the middle cube graphs in the light of the
theory of distance-regular graphs. In particular, we completely
determine their spectra (eigenvalues and their multiplicities, and
associated eigenvectors).

\end{abstract}


\section{Introduction}

The {\em $n$-cube} $Q_n$, or $n$-dimensional hypercube, has been
extensively studied. Nevertheless, many open questions remain.
Harary {\em et al.} wrote a comprehensive survey on hypercube
graphs~\cite{HaHaWu88}. Recall that the {\em $n$-cube} $Q_n$ has
vertex set $V=\{0,1\}^n$ and $n$-tuples representing vertices are
adjacent if and only if they differ in exactly one coordinate. Then,
$Q_n$ is an $n$-regular bipartite graph with $2^n$ vertices and it
is natural to consider its vertex set as partitioned into $n+1$
layers, the {\em layer} $L_k$ consisting of the $n\choose k$
vertices containing exactly $k$ $1$s, $0\leq k\leq n$.
Seeing the vertices of $Q_n$ as the characteristic vector of subsets of $[n]=\{1,2,\dots,n\}$,
the vertices of layer $L_k$ correspond to the subsets of cardinality $k$,
while the adjacencies correspond to the inclusion relation.

If $n$ is odd, $n=2k-1$, the middle two layers $L_{k}$ and $L_{k-1}$
of $Q_n$ have the same number ${n \choose k}={n \choose k-1}$ of
vertices. Then the middle cube graph, denoted by $MQ_k$, is the
graph induced by these two layers. It has been conjectured by
Dejter, Erd\H{o}s, Havel~\cite{Ha83} among others, that $MQ_k$ is
Hamiltonian. It is known that the conjecture holds for $n\le 16$
(see Savage and Shields~\cite{SaSh99}), and it was {\em almost}
solved by Robert Johnson~\cite{Ro04}.

In this paper we study
some of the properties of the middle cube graphs in the light of the
theory of distance-regular graphs. In particular, we completely
determine their spectra (eigenvalues and their multiplicities, and
associated eigenvectors). In this context, Qiu and Das
provided experimental results for eigenvalues of several interconnection
networks for which no complete characterization were known (see \cite[\S3.2]{QiDa04}).

Before proceeding with our study, we fix some basic definitions and
notation used throughout the paper. We denote by $G=(V,E)$ a
(simple, connected and finite) {\em graph} with vertex set $V$ an
edge set $E$. The {\em order} of the graph $G$ is $n=|V|$ and its
{\em size} is $m=|E|$. We label the vertices with the integers
$1,2,\ldots, n$. If $i$ is adjacent to $j$, that is, $ij\in E$, we
write $i\sim j$ or $i\,\stackrel{\scriptscriptstyle{(E)}}{\sim}\,
j$. The {\it distance} between two vertices is denoted by $\dist
(i,j)$. We also use the concepts of {\it even distance} and {\it odd
distance }between vertices (see Bond and Delorme~\cite{BoDe88}),
denoted by $\dist^+$ and $\dist^-$, respectively. They are defined
as the length of a shortest even (respectively, odd) walk between
the corresponding vertices. The set of vertices which are {\it
$\ell$-apart} from vertex $i$, with respect to the usual distance,
is $\Gamma_\ell(i)=\{j:\dist (i,j)=\ell\}$, so that the {\em degree}
of vertex $i$ is simply $\delta_i:=|\Gamma_1(i)|\equiv |\Gamma(i)|$.
The {\it eccentricity} of a vertex is $\ecc(i):=\max_{1\le j\le
n}\dist (i,j)$ and the {\it diameter} of the graph is $D\equiv
D(G):=\max_{1\le i\le n}\ecc(i)$.
Given $0\leq \ell \leq D$, the {\em distance}-$\ell$ graph $G_\ell$
has the same vertex set as $G$ and two vertices are adjacent in
$G_\ell$ if and only if they are at distance $\ell$ in $G$. An
{\em antipodal graph} $G$ is a connected graph of diameter $D$ for
which $G_{D}$ is a disjoint union of cliques. In this case, the
{\em folded graph of} $G$ is the graph $\overline{G}$ whose
vertices are the maximal cliques of $G_D$ and two vertices are
adjacent if their union contains and edge of $G$. If, moreover, all
maximal cliques of $G_D$ have the same size $r$ then $G$ is also
called an {\em antipodal $r$-cover} of $\overline{G}$ (double
cover if $r=2$, triple cover if $r=3$, etc.).

Recall that a graph $G$ with diameter $D$ is
{\em distance-regular} when, for all integers $h,i,j$ ($0\le
h,i,j \le D$) and vertices $u,v\in V$ with $\dist(u,v)=h$, the
numbers
$$
p_{ij}^h=|\{w\in V: \dist(u,w)=i, \dist(w,v)=j \}|
$$
do not depend on $u$ and $v$. In this case, such numbers are
called the {\em intersection parameters} and, for notational
convenience, we write $c_i=p_{1i-1}^i$, $b_i=p_{1i+1}^i$, and
$a_i=p_{1i}^i$ (see Brower {\em et al.}~\cite{BrCoNe89} and
Fiol~\cite{Fi02}).

\section{Preliminaries}

%
%
\subsection{The odd graphs}

The odd graph, independently introduced by Balaban {\em et
al.}~\cite{BaFaBa66} and Biggs~\cite{Bi72}, is a family of graphs
that has been studied by many authors (see~\cite{Bi79,Bi93,Go80}).
More recently, Fiol {\em et al.}~\cite{FiGaYe00} introduced the
twisted odd graphs, which share some interesting properties with the
odd graphs although they have, in general, a more involved
structure.

For $k\ge 2$, {\em the odd graph} $O_k$ has vertices representing the
($k-1$)-subsets of $[2k-1]=\{1,2,\dots,2k-1\}$, and two vertices are
adjacent if and only if they are disjoint. For example, $O_2$ is the
complete graph $K_3$, and $O_3$ is the Petersen graph. In general,
$O_k$ is a $k$-regular graph on $n={{2k-1}\choose {k-1}}$ vertices,
diameter $D = k - 1$ and girth $g = 3$ if $k = 2$, $g = 5$ if $k =
3$, and $g = 6$ if $k > 4$ (see Biggs~\cite{Bi93}).

The odd graph $O_k$ is a distance-regular graph with intersection
parameters
$$
b_j=k-\left[\frac{j+1}{2}\right], \quad
c_j=\left[\frac{j+1}{2}\right] \quad (0\le j\le k-1).
$$

With respect to the spectrum, the distinct eigenvalues of $O_k$ are
$\lambda_i=(-1)^i(k-i)$, $0\le i \le k-1$, with multiplicities
$$
m(\lambda_i)={{2k-1}\choose i} - {{2k-1}\choose {i-1}}=
\frac{k-i}{k} {{2k} \choose i}.
$$

%
%
\subsection{The bipartite double graph}

Let $G=(V,E)$ be a graph of order $n$, with vertex set $V=\{1,2,\ldots, n\}$.
Its {\em bipartite double} graph $\widetilde{G}=(\widetilde{V},\widetilde{E})$
is the graph with the duplicated vertex set $\widetilde{V}=\{1,2,\ldots,n,1',2',\ldots,n'\}$,
and adjacencies induced from the adjacencies in $G$ as follows:
\begin{eqnarray}
\label{def.doble.bipartit} i \,
\stackrel{\scriptscriptstyle{(E)}}{\sim} \, j \Rightarrow \left\{
\begin{array}{l}
i\, \stackrel{\scriptscriptstyle{(\widetilde{E})}}{\sim}\, j', \textrm{ and}\\
j\, \stackrel{\scriptscriptstyle{(\widetilde{E})}}{\sim}\, i'.
\end{array}
\right.
\end{eqnarray}
Thus, the edge set of $\widetilde{G}$ is $\widetilde{E}=\{ij'|ij\in
E \}$.

From the definition, it follows that $\widetilde{G}$ is a bipartite
graph with stable subsets $V_1=\{1,2,\ldots,n\}$ and $V_2=\{1',2',\ldots,n'\}$.
For example, if $G$ is a bipartite graph, then its bipartite double graph
$\widetilde{G}$ consists of two non-connected copies of $G$ (see Fig.~\ref{fig.P4}).

\begin{figure}[t]
\centering\includegraphics[scale=0.5]{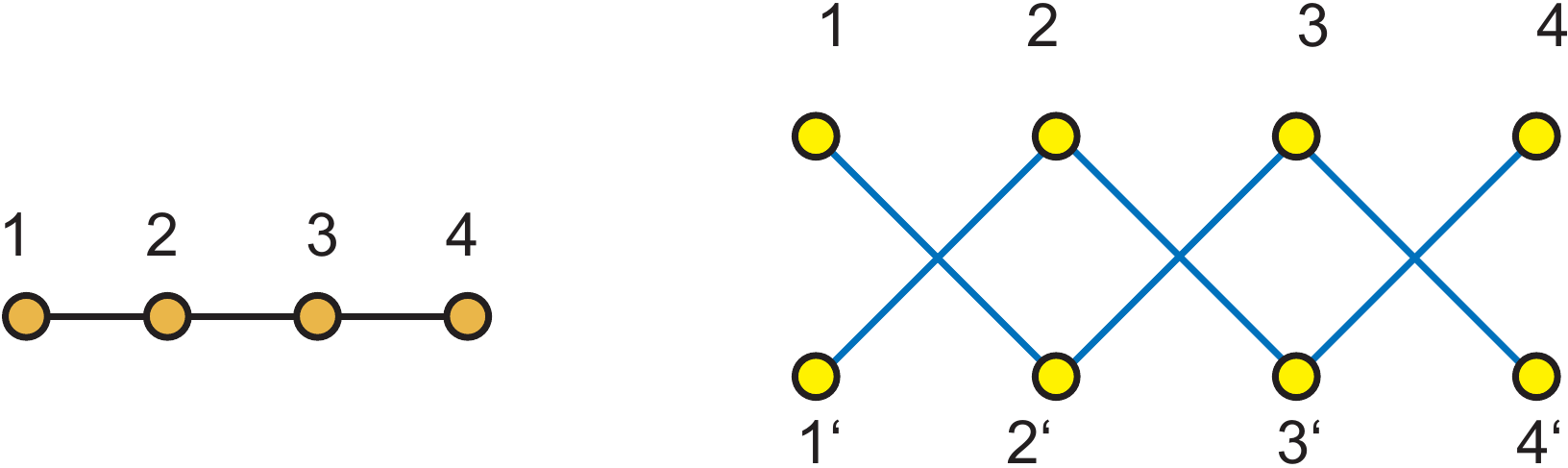} \hspace{1cm}
\caption{The path $P_4$ and its bipartite double graph.}
\label{fig.P4}
\end{figure}

The bipartite double graph $\widetilde{G}$ has an involutive
automorphism without fixed edges, which interchanges vertices $i$
and $i'$. On the other hand, the map  from $\widetilde{G}$ onto $G$
defined by $i'\mapsto i,\ i\mapsto i$
is a $2$-fold covering.

If $G$ is a $\delta$-regular graph, then $\widetilde{G}$ also is.
Moreover, if the degree sequence of the original graph $G$ is
$\delta=(\delta_1,\delta_2,\ldots,\delta_n)$, the degree sequence
for its bipartite double graph is
$\widetilde{\delta}=(\delta_1,\delta_2,\ldots,
\delta_n,\delta_1,\delta_2,\ldots,\delta_n)$.

The distance between vertices in the bipartite double graph
$\widetilde{G}$ can be given in terms of the even and odd distances
in $G$. Namely,
\begin{eqnarray*}
\dist_{\widetilde{G}}(i,j) &=& \dist_G^{+}(i,j)\\
\dist_{\widetilde{G}}(i,j') &=& \dist_G^{-}(i,j).
\end{eqnarray*}
Note that always $\dist_G^{-}(i,j)>0$ even if $i=j$. Actually,
$\widetilde{G}$ is
connected if and only if $G$ is connected and non-bipartite.

More precisely, it was proved by Bond and Delorme~\cite{BoDe88} that
if $G$ is a non-bipartite graph with diameter $D$, then its
bipartite double graph $\widetilde{G}$ has diameter $\widetilde{D}\le2D+1$,
and $\widetilde{D}=2D+1$ if and only if for some vertex $i\in V$ the
subgraph induced by the vertices at distance less than $D$ from $i$,
$G_{\le D-1}(i)$, is bipartite.


In Figs.~\ref{fig.LessDiam}-\ref{fig.dcPetersen},
we can see the bipartite double graph of three different graphs. The
cycle $C_5$ and Petersen graph both have diameter $D=2$, and their
bipartite double graphs have diameter $\widetilde{D}=2D+1=5$, while in
the first example (Fig.~\ref{fig.LessDiam}) $\widetilde{G}$ has
diameter $\widetilde{D}=3<2D+1$.

\begin{figure}[t]
\centering
\includegraphics[scale=0.5]{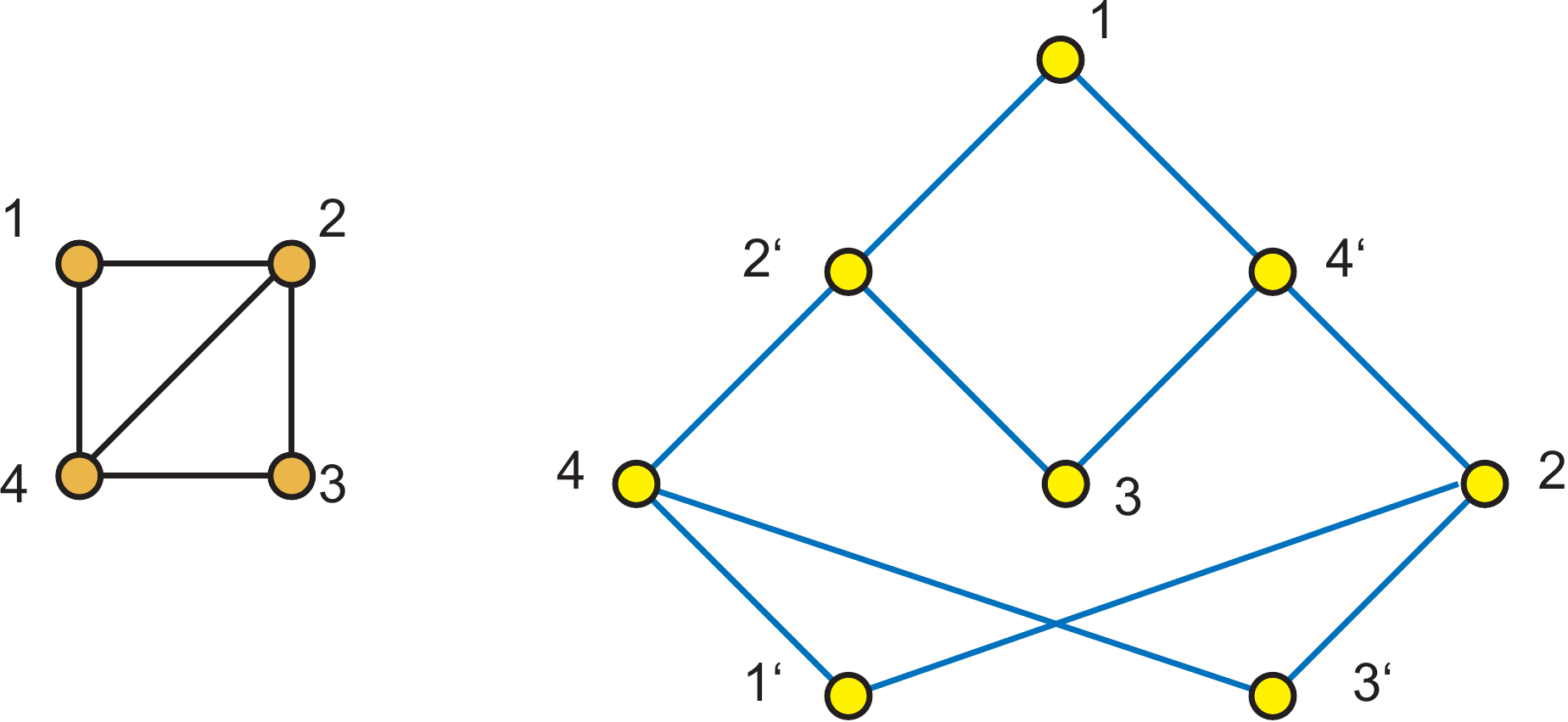}
\caption{Graph $G$ has diameter 2 and $\widetilde{G}$ has diameter 3.}
\label{fig.LessDiam}
\end{figure}

\begin{figure}[t]
\centering
\includegraphics[scale=0.5]{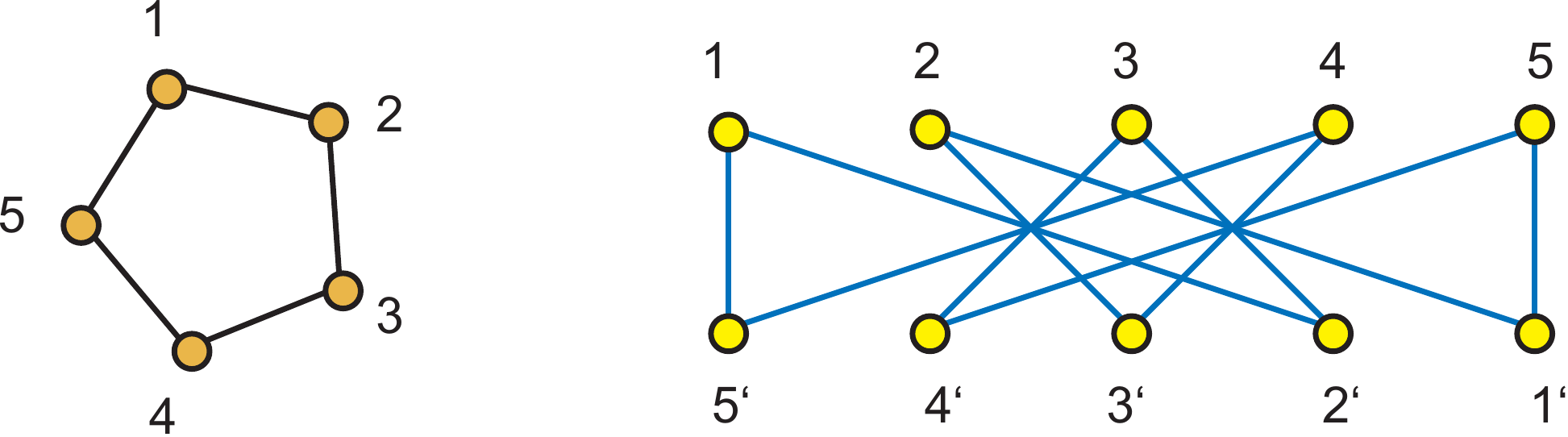}
\caption{$C_{5}$ and its bipartite double graph.}
\label{fig.C5}
\end{figure}

\begin{figure}[t]
\centering
\includegraphics[scale=0.5]{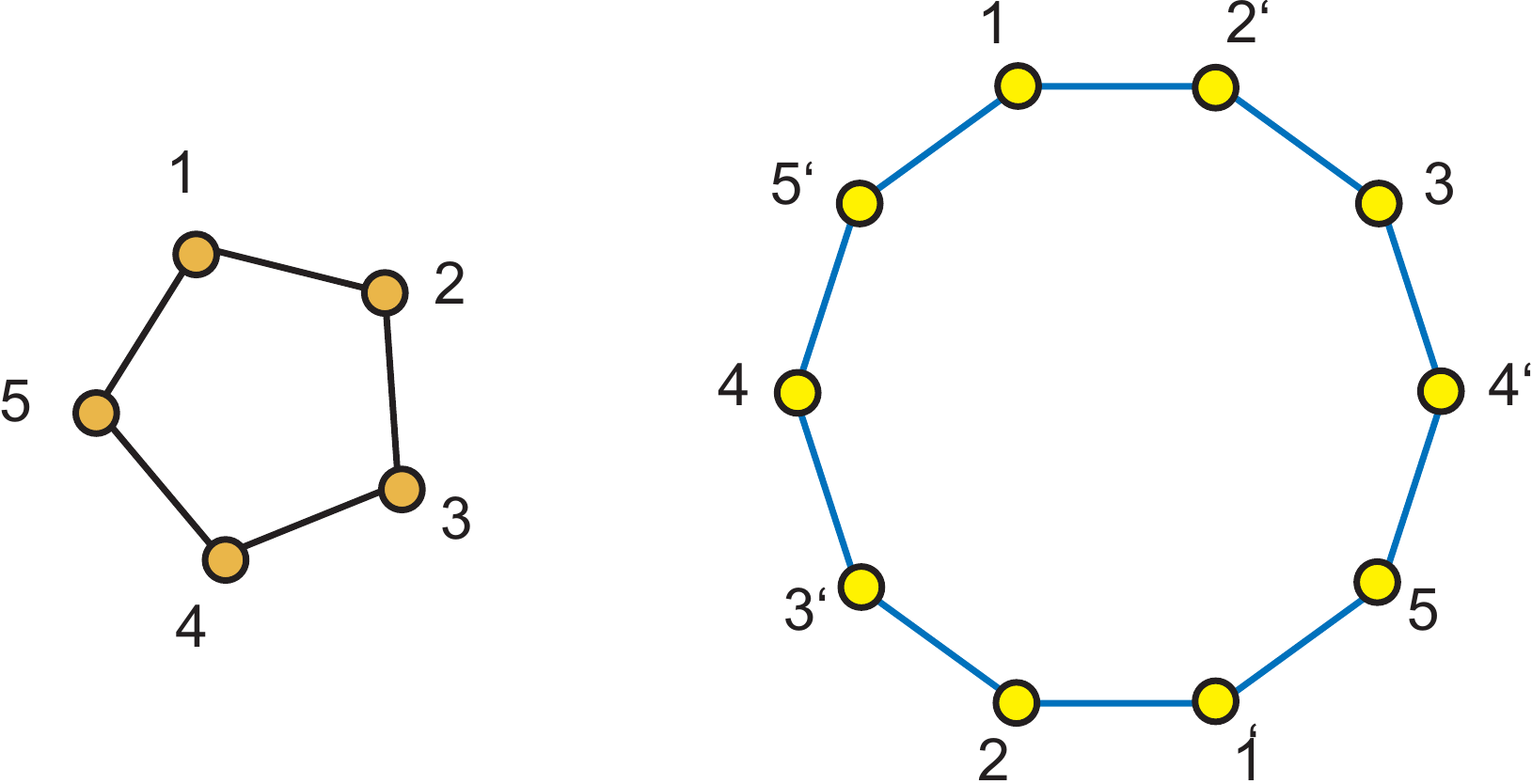}
\caption{$C_{5}$ and $C_{10}$ as another view of its bipartite double graph.}
\label{fig.C5.C10}
\end{figure}

\begin{figure}[t]
\centering
\centering\includegraphics[scale=0.5]{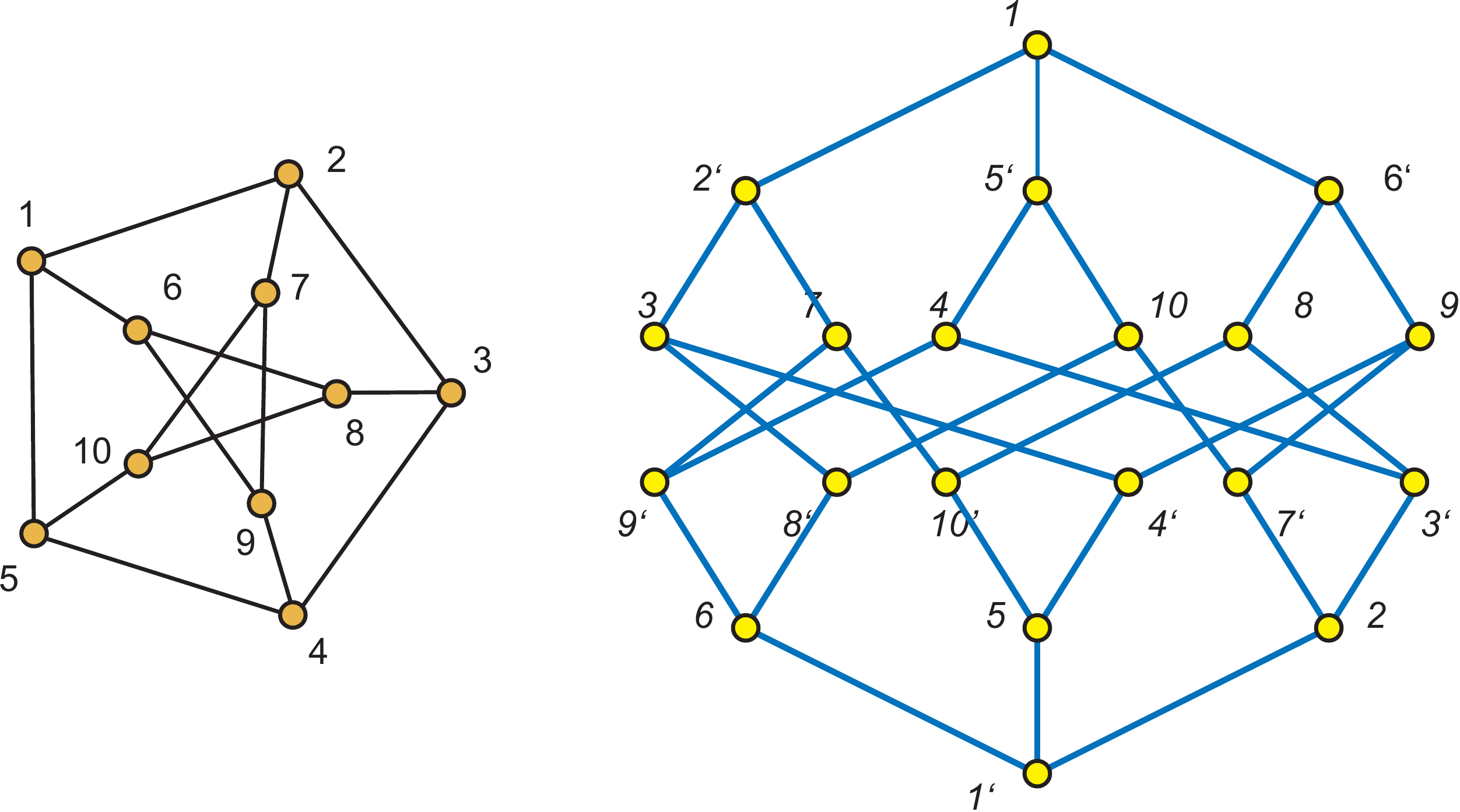}
\caption{Petersen's graph and its bipartite double graph.}
\label{fig.dcPetersen}
\end{figure}

The {\em extended bipartite double} graph $\widehat{G}$ of a
graph $G$ is obtained from its bipartite double graph by adding
edges $(i,i')$ for each $i\in V$. Note that when $G$ is bipartite,
then $\widehat{G}$ is the direct product $G \Box K_2$.

%
%
\subsection{Spectral properties of the bipartite double graph}

Let us now recall a useful result from spectral graph theory. For
any graph, it is known that the components of its eigenvalues can be
seen as charges on each vertex (see Fiol and Mitjana~\cite{FiMi07}
and Godsil~\cite{Go93}). Let $G=(V,E)$ be a graph with adjacency
matrix $\A$ and $\lambda$-eigenvector $\v$. Then, the charge of
vertex $i\in V$ is the entry $v_i$ of $\v$, and the equation
$\A\v=\lambda\v$ means that the sum of the charges of the neighbors
of vertex $i$ is $\lambda$ times the charge of vertex $i$:
$$
(\A\v)_i=\sum_{i \, \stackrel{\scriptscriptstyle{(E)}}{\sim} \, j}
v_j=\lambda v_i.
$$

In what follows we compute the eigenvalues of the bipartite double
graph $\widetilde{G}$ and the extended bipartite double graph
$\widehat{G}$ as functions of the eigenvalues of a non-bipartite
graph $G$. We also show how to obtain the eigenvalues together with
the corresponding eigenvectors of $\widetilde{G}$ and $\widehat{G}$.

First, we recall the following technical result, due to
Silvester~\cite{Si00}, on the determinant of some block matrices:

\begin{theo}
\label{lema.blocks} Let $F$ be a field and let $R$ be a commutative
subring of $F^{n\times n}$, the set of all $n\times n$ matrices over
$F$. Let $\M\in R^{m\times m}$, then
$$
det_F(\M)=det_F\big(det_R(\M)\big).
$$
\end{theo}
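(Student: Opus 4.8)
The plan is to read $\M$ in two compatible ways and to show that both determinants equal the same product. Viewing $\M$ as an $m\times m$ matrix over the commutative ring $R$, the element $\det_R(\M)\in R$ is the member of $F^{n\times n}$ given by the Leibniz formula $\det_R(\M)=\sum_{\sigma\in S_m}\mathrm{sgn}(\sigma)\prod_{i=1}^m M_{i\sigma(i)}$, where the product is taken in $R$ (legitimate precisely because $R$ is commutative). Viewing $\M$ instead as a single $mn\times mn$ matrix over $F$ gives $\det_F(\M)$. Both $\det_F(\M)$ and $\det_F(\det_R(\M))$ are elements of $F$ obtained by finitely many additions and multiplications of the $F$-entries of the blocks $M_{ij}$ (the Leibniz formula applied twice), and these computations return the same elements whether performed in $F$ or in an extension field. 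Hence it suffices to prove the identity after passing to the algebraic closure $\overline{F}$, where the blocks still generate a commutative subring of $\overline{F}^{\,n\times n}$. From now on I assume $F=\overline{F}$.

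The key step is simultaneous triangularization. The finitely many blocks $M_{ij}$ pairwise commute, so over the algebraically closed field $F$ they are simultaneously triangularizable: there is $P\in\mathrm{GL}_n(F)$ with $P^{-1}M_{ij}P$ upper triangular for all $i,j$. Conjugating the $mn\times mn$ matrix $\M$ by the block-diagonal matrix $\mathrm{diag}(P,\dots,P)$ leaves $\det_F(\M)$ unchanged and replaces each block by $M'_{ij}=P^{-1}M_{ij}P$. Since $X\mapsto P^{-1}XP$ is a ring automorphism of $F^{n\times n}$, it carries $R$ to a commutative subring $R'$ and commutes with the Leibniz formula, so $\det_{R'}(\M')=P^{-1}\det_R(\M)P$ and therefore $\det_F(\det_{R'}(\M'))=\det_F(\det_R(\M))$. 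Thus I may assume that every block $M_{ij}$ is upper triangular.

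Finally I extract the common diagonal. For upper triangular matrices the map $\phi_\ell$ sending a matrix to its $\ell$-th diagonal entry is a ring homomorphism to $F$, and sums and products of upper triangular matrices remain upper triangular; hence $\det_R(\M)$ is itself upper triangular, with $\ell$-th diagonal entry $\phi_\ell(\det_R(\M))=\det_F\big((\phi_\ell(M_{ij}))_{i,j}\big)=:\det_F(D_\ell)$, the determinant of the $m\times m$ matrix $D_\ell$ of $\ell$-th diagonal entries. Consequently $\det_F(\det_R(\M))=\prod_{\ell=1}^n\det_F(D_\ell)$. On the other hand, reordering the $mn$ basis vectors by their within-block coordinate $\ell$ (a simultaneous row and column permutation, hence determinant-preserving) turns $\M$ into a block upper triangular matrix whose diagonal blocks are exactly the $D_\ell$, because $(M_{ij})_{\ell\ell'}=0$ for $\ell>\ell'$. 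Therefore $\det_F(\M)=\prod_{\ell=1}^n\det_F(D_\ell)$ as well, and the two sides coincide. The only genuinely nontrivial ingredient is the simultaneous triangularization of the commuting family $\{M_{ij}\}$; once all blocks are upper triangular, the identification of both determinants with $\prod_{\ell}\det_F(D_\ell)$ is the bookkeeping of a single permutation similarity, which I expect to be routine.
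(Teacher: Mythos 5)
Your proof is correct, but there is no proof in the paper to compare it against: the paper states this theorem as a known result, citing Silvester~\cite{Si00}, and only uses its $2\times 2$ consequence, Eq.~(\ref{determinant}). Relative to Silvester's published argument, your route is genuinely different. Silvester stays over the original field and argues elementarily: a Schur-complement-type block factorization handles the case where a suitable block is invertible, adjoining an indeterminate (replacing a block $\D$ by $\D+x\I$ and working with polynomial identities over $F[x]$) removes the invertibility hypothesis, and induction on $m$ gives the general statement. You instead pass to $\overline{F}$, simultaneously upper-triangularize the pairwise commuting blocks, and observe that both $\det_F(\M)$ and $\det_F(\det_R(\M))$ collapse to the same product $\prod_{\ell=1}^{n}\det_F(D_\ell)$ of determinants of the diagonal-slice matrices $D_\ell=\big((M_{ij})_{\ell\ell}\big)_{i,j}$. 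Your supporting steps are all sound: the reduction to $\overline{F}$ is legitimate because both sides are fixed polynomial expressions in the block entries; the blocks do generate a commutative subring of $\overline{F}^{\,n\times n}$, so simultaneous triangularization applies; the $\ell$-th diagonal-entry map is indeed a ring homomorphism on upper triangular matrices, so it commutes with the Leibniz formula; and the simultaneous row-and-column permutation preserves the determinant and exhibits $\M$ as block upper triangular with diagonal blocks $D_\ell$. The trade-off between the two approaches: yours is conceptually transparent (both determinants are visibly the same product) but imports a nontrivial external theorem, the simultaneous triangularizability of a commuting family over an algebraically closed field, plus the detour through $\overline{F}$; Silvester's is longer and more computational but entirely elementary and never leaves the ground field.
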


For example, if $\M=
\begin{pmatrix}
\A & \B \\
\C & \D
\end{pmatrix}
$, where $\A ,\B, \C, \D$ are $n\times n$ matrices over $F$ which
commute with each other, then Theorem~\ref{lema.blocks} reads
\begin{equation}
\label{determinant} det_F(\M)=det_F(\A\D-\B\C).
\end{equation}

Now we can use the above theorem to find the characteristic
polynomial of the bipartite double and the extended bipartite double
graphs.

\begin{theo}
Let $G$ be a graph on $n$ vertices, with the adjacency matrix $\A$ and
characteristic polynomial $\phi_G(x)$. Then, the characteristic
polynomials of $\widetilde{G}$ and $\widehat{G}$ are, respectively,
\begin{eqnarray}
\label{policarGtitlla}\phi_{\widetilde{G}}(x) &=& (-1)^n \phi_G(x)\phi_G(-x),\\
\label{policarGbarret}\phi_{\widehat{G}}(x) &=& (-1)^n
\phi_G(x-1)\phi_G(-x-1).
\end{eqnarray}
\end{theo}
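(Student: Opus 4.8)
The plan is to realize each of the two graphs as having an adjacency matrix in the $2\times 2$ block form to which the determinant identity \eqref{determinant} applies, and then to compute the resulting characteristic polynomial directly. First I would write down the adjacency matrices. From the definition of the bipartite double graph, with the vertex set ordered as $1,\dots,n,1',\dots,n'$ and edges $ij'$ exactly when $ij\in E$, the adjacency matrix of $\widetilde{G}$ is
\begin{equation}
\label{adjGtitlla}
\A_{\widetilde{G}}=\begin{pmatrix} \O & \A \\ \A & \O \end{pmatrix},
\end{equation}
where $\O$ is the $n\times n$ zero matrix. For the extended double graph $\widehat{G}$ we additionally insert the edges $(i,i')$, which contributes the identity $\I$ in the off-diagonal blocks, giving
\begin{equation}
\label{adjGbarret}
\A_{\widehat{G}}=\begin{pmatrix} \O & \A+\I \\ \A+\I & \O \end{pmatrix}.
\end{equation}

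The key step is to apply Theorem~\ref{lema.blocks} in the specialized form \eqref{determinant} to the matrix $\M=x\I_{2n}-\A_{\widetilde{G}}$. The four $n\times n$ blocks of $\M$ are $x\I$, $-\A$, $-\A$, and $x\I$; these are all polynomials in $\A$, hence lie in the commutative ring generated by $\A$ and so commute with one another, which is exactly the hypothesis needed. Then \eqref{determinant} gives
$$
\phi_{\widetilde{G}}(x)=\det(\M)=\det\big((x\I)(x\I)-(-\A)(-\A)\big)=\det(x^2\I-\A^2).
$$
Factoring $x^2\I-\A^2=(x\I-\A)(x\I+\A)$ and using multiplicativity of the determinant yields $\det(x\I-\A)\det(x\I+\A)=\phi_G(x)\cdot\det(x\I+\A)$. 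It remains to rewrite $\det(x\I+\A)=\det\big(-(-x\I-\A)\big)=(-1)^n\phi_G(-x)$, which produces the claimed formula \eqref{policarGtitlla}.

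For $\widehat{G}$ I would repeat the identical computation with $\A$ replaced by $\A+\I$ throughout. The block $\M=x\I_{2n}-\A_{\widehat{G}}$ again has commuting polynomial-in-$\A$ blocks, so \eqref{determinant} gives $\phi_{\widehat{G}}(x)=\det\big(x^2\I-(\A+\I)^2\big)=\det(x\I-\A-\I)\det(x\I+\A+\I)$. The first factor is $\det\big((x-1)\I-\A\big)=\phi_G(x-1)$, and the second is $\det\big(-(-x\I-\A-\I)\big)=(-1)^n\det\big((-x-1)\I-\A\big)=(-1)^n\phi_G(-x-1)$, giving \eqref{policarGbarret}.

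I do not anticipate a serious obstacle here; the only point requiring care is the verification that the block structures in \eqref{adjGtitlla} and \eqref{adjGbarret} are correct and that the blocks genuinely commute, so that Theorem~\ref{lema.blocks} is applicable. The commutativity is immediate because every block is a polynomial in the single matrix $\A$, and such polynomials always commute. The remaining manipulations are the elementary factorization of $x^2\I-\M_0^2$ and the sign bookkeeping in converting $\det(x\I+\A)$ into a value of $\phi_G$ at a reflected argument, both of which are routine.
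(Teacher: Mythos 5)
Your proposal is correct and follows essentially the same route as the paper's own proof: the same block forms for the adjacency matrices of $\widetilde{G}$ and $\widehat{G}$, the same application of Silvester's determinant identity \eqref{determinant}, and the same factorization and sign bookkeeping to express the result in terms of $\phi_G$. If anything, you are slightly more careful than the paper in explicitly noting that the blocks commute (being polynomials in $\A$), which is the hypothesis Theorem~\ref{lema.blocks} requires.
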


\begin{proof}
From the definitions of $\widetilde{G}$ and $\widehat{G}$, their
adjacency matrices are, respectively,
$$
\widetilde{\A}=
\begin{pmatrix}
\O & \A \\
\A & \O
\end{pmatrix}
\ \mbox{ and } \ \widehat{\A}=
\begin{pmatrix}
\O & \A+\I \\
\A+\I & \O
\end{pmatrix}.
$$

\noindent Thus, by (\ref{determinant}), the characteristic
polynomial of
$\widetilde{G}$ is
\begin{eqnarray*}
\phi_{\widetilde{G}}(x) &=& \det (x\I_{2n}-\widetilde{\A}) = \det
\begin{pmatrix}
x\I_n & -\A \\
-\A & x\I_n
\end{pmatrix}
= \det(x^2\I_n-\A^2)\\
&=& \det(x\I_n-\A)\det(x\I_n+\A) = (-1)^n \phi_G(x)\phi_G(-x),
\end{eqnarray*}
whereas, the characteristic polynomial of $\widehat{G}$ is
\begin{eqnarray*}
\phi_{\widehat{G}}(x)& = & \det (x\I_{2n}-\widehat{\A}) = \det
\begin{pmatrix}
x\I_n & -\A-\I_n \\
-\A-\I_n & x\I_n
\end{pmatrix} \\
&=& \det\big(x^2\I_n-(\A+\I_n)^2\big)=
\det\big(x\I_n-(\A+\I_n)\big)\det\big(x\I_n+(\A+\I_n)\big)\\
&=& \det\big((x-1)\I_n-\A\big)(-1)^n\det\big(-(x+1)\I_n -\A\big)\\
&=& (-1)^n \phi_G(x-1)\phi_G(-x-1).
\end{eqnarray*}
\end{proof}

As a consequence, we have the following corollary:

\begin{coro}
\label{prop.dc.espectrum} Given a graph $G$ with spectrum
$$
\sp G=\{\lambda_0^{m_0},\lambda_1^{m_1}, \dots, \lambda_d^{m_d} \},
$$
where the superscripts denote multiplicities, then the spectra of
$\widetilde{G}$ and $\widehat{G}$ are, respectively,
\begin{eqnarray*}
\sp\widetilde{G} &=& \{\pm \lambda_0^{m_0},\pm \lambda_1^{m_1},
\ldots, \pm\lambda_d^{m_d} \},\\
\sp\widehat{G} &=&
\{\pm(1+\lambda_0)^{m_0},\pm(1+\lambda_1)^{m_1},\ldots,
\pm(1+\lambda_d)^{m_d}\}.
\end{eqnarray*}
\end{coro}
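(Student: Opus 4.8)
The plan is to derive the spectra of $\widetilde{G}$ and $\widehat{G}$ directly from the factored characteristic polynomials established in the preceding theorem. The key observation is that the multiset of eigenvalues of a graph is exactly the multiset of roots (with multiplicity) of its characteristic polynomial, so once the polynomials are factored as products of two copies of $\phi_G$ evaluated at shifted and/or reflected arguments, the roots can simply be read off.

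First I would recall from (\ref{policarGtitlla}) that $\phi_{\widetilde{G}}(x) = (-1)^n \phi_G(x)\phi_G(-x)$. The factor $\phi_G(x)$ contributes each eigenvalue $\lambda_i$ of $G$ with its original multiplicity $m_i$. For the factor $\phi_G(-x)$, I would note that if $\lambda_i$ is a root of $\phi_G$, then $-\lambda_i$ is a root of $\phi_G(-x)$ with the same multiplicity $m_i$; equivalently, the roots of $\phi_G(-x)$ are precisely $\{-\lambda_i\}$. Hence the roots of $\phi_{\widetilde{G}}$, counted with multiplicity and ignoring the nonzero constant $(-1)^n$ which does not affect the root set, are $\{\lambda_i^{m_i}\} \cup \{(-\lambda_i)^{m_i}\}$, which is exactly the claimed spectrum $\{\pm\lambda_i^{m_i}\}$ for $0 \le i \le d$.

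For $\widehat{G}$ I would proceed identically using (\ref{policarGbarret}), namely $\phi_{\widehat{G}}(x) = (-1)^n \phi_G(x-1)\phi_G(-x-1)$. The roots of $\phi_G(x-1)$ satisfy $x-1=\lambda_i$, i.e. $x = 1+\lambda_i$, each with multiplicity $m_i$; the roots of $\phi_G(-x-1)$ satisfy $-x-1 = \lambda_i$, i.e. $x = -(1+\lambda_i)$, again with multiplicity $m_i$. Combining the two factors yields the root multiset $\{(1+\lambda_i)^{m_i}\} \cup \{(-(1+\lambda_i))^{m_i}\} = \{\pm(1+\lambda_i)^{m_i}\}$, matching the stated $\sp\widehat{G}$.

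Since this corollary is an immediate translation of an already-proved polynomial identity into the language of eigenvalue multisets, there is no genuine obstacle. The only point requiring mild care is bookkeeping of multiplicities when $G$ has an eigenvalue whose negative (respectively, reflection through $-1$) is also an eigenvalue of $G$: in that situation the symbols $\pm\lambda_i$ overlap and one should understand the listed spectrum as a multiset in which coincident values have their multiplicities added. I would remark on this briefly so that the statement is unambiguous, but it does not affect the correctness of the computation.
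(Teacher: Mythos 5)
Your proposal is correct and follows essentially the same route as the paper: the authors likewise just read the roots $\pm\lambda_i$ and $\pm(1+\lambda_i)$, with their multiplicities, directly off the factorizations (\ref{policarGtitlla}) and (\ref{policarGbarret}). Your additional remark about summing multiplicities when values such as $\lambda_i$ and $-\lambda_j$ coincide is a sensible clarification, but it does not change the argument.
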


\begin{proof}
Just note that, by (\ref{policarGtitlla}) and
(\ref{policarGbarret}), for each root $\lambda$ of $\phi_G(x)$,
$\mu=\pm\lambda$ are roots of $\phi_{\widetilde{G}}(x)$, whereas
$\mu=\pm(1+\lambda)$ are roots of $\phi_{\widehat{G}}(x)$.
\end{proof}

Note that the spectra of $\widetilde{G}$ and $\widehat{G}$ are
symmetric, as expected, because both $\widetilde{G}$ and
$\widehat{G}$ are bipartite graphs.

In the next theorem we are concerned with the eigenvectors of
$\widetilde{G}$ and $\widehat{G}$, in terms of the eigenvectors of $G$.
The computations also give an alternative derivation of the above spectra.
\begin{theo}
Let $G$ be a graph and $\v$ a $\lambda$-eigenvector of $G$.
Let us consider the vector $\u^+$ with components
$u_i^+=u_{i'}^+=v_{i}$, and $\u^-$, with components
$u_i^-=v_i$ and $u_{i'}^-=-v_{i}$, $1 \leq i,i' \leq n$.
Then,
\begin{itemize}
\item $\u^+$ is a $\lambda$-eigenvector of $\widetilde{G}$ and
a $(1+\lambda)$-eigenvector of $\widehat{G}$;
\item $\u^-$ is a $(-\lambda)$-eigenvector of $\widetilde{G}$ and
a $(-1-\lambda)$-eigenvector of $\widehat{G}$.
\end{itemize}
\end{theo}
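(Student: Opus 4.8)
The plan is to verify directly that each candidate vector is an eigenvector by computing the action of the adjacency matrices $\widetilde{\A}$ and $\widehat{\A}$ on $\u^+$ and $\u^-$, using the block structure already displayed in the previous theorem. Writing vectors in block form with the first $n$ components indexed by $V_1=\{1,\dots,n\}$ and the last $n$ by $V_2=\{1',\dots,n'\}$, we have $\u^+=\binom{\v}{\v}$ and $\u^-=\binom{\v}{-\v}$. Since $\widetilde{\A}=\left(\begin{smallmatrix}\O & \A\\ \A & \O\end{smallmatrix}\right)$, the product $\widetilde{\A}\u^+$ has top block $\A\v$ and bottom block $\A\v$; invoking $\A\v=\lambda\v$ this equals $\binom{\lambda\v}{\lambda\v}=\lambda\u^+$. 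The same computation on $\u^-$ gives top block $\A(-\v)=-\lambda\v$ and bottom block $\A\v=\lambda\v$, i.e. $\binom{-\lambda\v}{\lambda\v}=-\lambda\u^-$, establishing the two claims for $\widetilde{G}$.

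For the extended double graph I would repeat the computation with $\widehat{\A}=\left(\begin{smallmatrix}\O & \A+\I\\ \A+\I & \O\end{smallmatrix}\right)$. The only new ingredient is the action of $\A+\I$ on $\v$, which is $(\A+\I)\v=\A\v+\v=(\lambda+1)\v$. Hence $\widehat{\A}\u^+$ has both blocks equal to $(\lambda+1)\v$, giving $(1+\lambda)\u^+$, while $\widehat{\A}\u^-$ has top block $(\A+\I)(-\v)=-(1+\lambda)\v$ and bottom block $(\A+\I)\v=(1+\lambda)\v$, giving $(-1-\lambda)\u^-$. This settles all four statements.

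Two small points deserve explicit mention rather than being swept under the rug. First, one should check that $\u^+$ and $\u^-$ are genuinely nonzero, which is immediate since $\v\neq\vec0$ forces at least one $v_i\neq0$ and that coordinate survives in both $\u^+$ and $\u^-$; thus they are true eigenvectors and not the trivial solution. Second, I would note that the adjacency relations encoded in the block matrices exactly match the charge interpretation recalled earlier: the equation $\widetilde{\A}\u=\mu\u$ read coordinate-wise says that for vertex $i\in V_1$ the sum of charges over its neighbors in $V_2$ equals $\mu u_i$, and the neighbors of $i$ in $\widetilde{G}$ are precisely the $j'$ with $j\sim_{(E)} i$, so the block product $\A\v$ is the correct neighbor-sum. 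This gives an independent, combinatorial reading of the same identity.

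I do not anticipate any real obstacle here: the result is a direct block-matrix verification and the matrix identities $\A\v=\lambda\v$ and $(\A+\I)\v=(1+\lambda)\v$ do all the work. The only thing requiring care is bookkeeping of signs in the $\u^-$ case, where the top and bottom blocks acquire opposite signs; getting the sign placement right is what distinguishes the $+\lambda$ eigenvalue from the $-\lambda$ one. As a consistency check, the eigenvalues $\pm\lambda$ and $\pm(1+\lambda)$ produced here reproduce exactly the spectra computed in Corollary~\ref{prop.dc.espectrum} via the characteristic polynomials, and since the $\u^+,\u^-$ arising from a basis of eigenvectors of $\A$ are linearly independent and total $2n$ in number, they form a complete eigenbasis, so this approach also recovers the full spectrum as promised in the statement preceding the theorem.
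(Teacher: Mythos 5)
Your proof is correct and takes essentially the same approach as the paper: the paper verifies the eigenvector equations coordinate-by-coordinate via neighbor sums (the charge interpretation), while you carry out the identical computation in block-matrix form with $\widetilde{\A}$ and $\widehat{\A}$, a reformulation whose equivalence you yourself point out. The sign bookkeeping for $\u^-$ and the remark that $\u^{\pm}\neq\vec0$ are both fine, so there is nothing to fix.
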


\begin{proof}
In order to show that
$\u^+$ is a $\lambda$-eigenvector of $\widetilde{G}$, we distinguish
two cases:
\begin{itemize}
\item
For a given vertex $i$, $1\le i\le n$, all its adjacent vertices are
of type $j'$, with $i \, \stackrel{\scriptscriptstyle{(E)}}{\sim} \,
j$. Then
$$
(\A\u^+)_i= \sum_{j' \,
\stackrel{\scriptscriptstyle{(\widetilde{E})}}{\sim} \,
i}u_{j'}^+=\sum_{j \, \stackrel{\scriptscriptstyle{(E)}}{\sim} \, i}
v_j= \lambda v_i= \lambda u_i^+.
$$
\item
For a given vertex $i'$, $1\le i\le n$, all its adjacent vertices are
of type $j$, with $i \, \stackrel{\scriptscriptstyle{(E)}}{\sim} \,
j$. Then
$$
(\A\u^+)_{i'}= \sum_{j \,
\stackrel{\scriptscriptstyle{(\widetilde{E})}}{\sim} \,
i'}u_{j}^+=\sum_{j \, \stackrel{\scriptscriptstyle{(E)}}{\sim} \, i}
v_j= \lambda v_{i}= \lambda u_i^+.
$$
\end{itemize}

By a similar reasoning with $u^-$, we obtain
$$
(\A\u^-)_i =
\sum_{j' \,\stackrel{\scriptscriptstyle{(\widetilde{E})}}{\sim} \, i}u_{j'}^- =
-\sum_{j \, \stackrel{\scriptscriptstyle{(E)}}{\sim} \, i} v_j =
-\lambda u_i^-
\,\textrm{ and }\,
(\A\u^-)_{i'} =
\sum_{j \,\stackrel{\scriptscriptstyle{(\widetilde{E})}}{\sim} \, i'}u_{j}^- =
\sum_{j \, \stackrel{\scriptscriptstyle{(E)}}{\sim} \, i} v_j =
-\lambda u_{i'}^-.
$$
Therefore, $\u^-$ is a ($-\lambda$)-eigenvector of the
bipartite double graph $\widetilde{G}$.

In the same way, we can prove that $\u^+$ and $\u^-$ are
eigenvectors of $\widehat{G}$ with respective eigenvalues
$1+\lambda$ and $-1-\lambda$.
\end{proof}

Notice that, for every linearly independent eigenvectors $\v_1$ and
$\v_2$ of $G$, we get the linearly independent eigenvectors
$\u_1^{\pm}$ and $\u_2^{\pm}$ of $\widetilde{G}$. As a consequence,
the geometric multiplicity of eigenvalue $\lambda$ of $G$ coincides
with the geometric multiplicities of the eigenvalues $\lambda$ and
$-\lambda$ of $\widetilde{G}$, and $1+\lambda$ and $-1-\lambda$ of $\widehat{G}$.

\section{The middle cube graphs}

For $k \geq 1$ and $n = 2k - 1$,  the {\em middle cube graph} $MQ_k$
is the subgraph of the $n$-cube $Q_n$ induced by the vertices whose
binary representations have either $k-1$ or $k$ number of $1$s.
Then, $MQ_k$ has order $2{n\choose{k}}$ and is $k$-regular, since a
vertex with $k-1$ $1$s has $k$ zeroes, so it is adjacent to $k$
vertices with $k$ $1$s, and similarly a vertex with $k$ $1$s has $k$
adjacent vertices with $k-1$ $1$s (see Figs.~\ref{fig.3midcub}
and~\ref{fig.5midcub}).

The middle cube graph $MQ_k$ is a bipartite graph with stable
sets $V_0$ and $V_1$ constituted by the vertices whose corresponding
binary string has, respectively, even or odd {\em Hamming weight},
that is, number of $1$s. The diameter of the middle cube graph
$MQ_k$ is $D=2k-1$.

%
%
\subsection{$MQ_k$ is the bipartite double graph of $O_k$}

Notice that, if $A$ and $B$ are both subsets of $[2k-1]$, $A\subset
B$ if and only if $A$ and $\overline{B}$ are disjoint. Moreover, if
$|B|=k$ then $|\overline{B}|=k-1$. This gives the following result.
\begin{propo}
The middle cube graph $MQ_k$ is isomorphic to $\widetilde{O}_k$,
the bipartite double graph of $O_k$.
\end{propo}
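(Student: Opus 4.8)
The plan is to exhibit an explicit bijection between the vertex set of $MQ_k$ and the duplicated vertex set $\widetilde{V}$ of $\widetilde{O}_k$, and then verify that it carries the adjacency relation of $MQ_k$ onto that of $\widetilde{O}_k$. Recall that the vertices of $O_k$ are the $(k-1)$-subsets of $[2k-1]$, so $\widetilde{O}_k$ has two copies of these: a copy $\{A : |A|=k-1\}$ and a duplicate copy $\{A' : |A|=k-1\}$. On the other side, the vertices of $MQ_k$ are the subsets of $[2k-1]$ of cardinality $k-1$ (layer $L_{k-1}$) together with those of cardinality $k$ (layer $L_k$), with two subsets adjacent exactly when one is contained in the other.

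The key idea, already signalled by the observation preceding the statement, is to map the layer $L_{k-1}$ identically onto the first copy $V_1 = \{A : |A|=k-1\}$ of $\widetilde{O}_k$, and to map each $k$-subset $B \in L_k$ to the duplicate $(\overline{B})'$ of its complement, noting that $|\overline{B}| = (2k-1)-k = k-1$ so that $\overline{B}$ is indeed a legitimate vertex of $O_k$. Since complementation is an involution on subsets of $[2k-1]$ and restricts to a bijection between $k$-subsets and $(k-1)$-subsets, this assignment $\varphi(A)=A$ for $|A|=k-1$ and $\varphi(B)=(\overline{B})'$ for $|B|=k$ is a well-defined bijection from $V(MQ_k)$ onto $\widetilde{V}(\widetilde{O}_k)$. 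The orders agree, both being $2\binom{2k-1}{k-1}=2\binom{n}{k}$.

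The heart of the proof is checking that $\varphi$ preserves adjacency, and this is where the displayed set-theoretic fact does the work. An edge of $MQ_k$ joins some $A\in L_{k-1}$ to some $B\in L_k$, and by definition this edge is present precisely when $A\subset B$. By the observation that $A\subset B$ if and only if $A$ and $\overline{B}$ are disjoint, this is equivalent to $A\cap\overline{B}=\emptyset$, which is exactly the adjacency relation of $O_k$ between the $(k-1)$-subsets $A$ and $\overline{B}$. Now, by the definition of the bipartite double graph in~(\ref{def.doble.bipartit}), $A\sim\overline{B}$ in $O_k$ induces the edge $A\sim(\overline{B})'$ in $\widetilde{O}_k$; and $A\sim(\overline{B})'$ is precisely $\varphi(A)\sim\varphi(B)$. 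Thus edges correspond under $\varphi$, and since every edge of $MQ_k$ runs between the two layers and every edge of $\widetilde{O}_k$ runs between $V_1$ and $V_2$, the correspondence is exact in both directions.

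I do not expect a serious obstacle here; the argument is essentially a verification that the natural complementation map intertwines the inclusion relation with the disjointness relation. The only point requiring a moment of care is the bookkeeping of the duplicated vertices: one must be sure that the $k$-layer maps into the primed copy $V_2$ and not back into $V_1$, so that the bipartition $(V_0,V_1)$ of $MQ_k$ by Hamming weight parity lines up with the bipartition $(V_1,V_2)$ of $\widetilde{O}_k$. Once the bijection and the biconditional $A\subset B\iff A\cap\overline{B}=\emptyset$ are in place, the isomorphism follows immediately, and no further computation is needed.
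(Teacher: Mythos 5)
Your proposal is correct and is essentially the paper's own proof: your map $\varphi$ is exactly the inverse of the bijection $f$ used in the paper (which sends $\u\mapsto\u$ and $\u'\mapsto\overline{\u}$), and both arguments rest on the same key equivalence $A\subset B \iff A\cap\overline{B}=\varnothing$. Your write-up is if anything slightly more explicit about checking the edge correspondence in both directions and about the alignment of the bipartitions, but the substance is identical.
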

\begin{proof}
The mapping from $\widetilde{O}_k$ to $MQ_k$ defined by:
$$
\begin{array}{rcccl}
f: & V[\widetilde{O}_k] & \to & V[MQ_k] \\
& \u &\mapsto & \u \\
& \u' &\mapsto & \overline{\u}
\end{array}
$$
is clearly bijective. Moreover, according to the definition of
bipartite double graph in Eq.(\ref{def.doble.bipartit}), if $\u$ and $\v'$ are two vertices of
$\widetilde{O}_k$, then
$$
\u\sim\v' \Leftrightarrow \u\cap\v=\varnothing \Leftrightarrow
\u\subset\overline{\v},
$$
which is equivalent to say that
if $\u\sim\v'$, in $\widetilde{O}_k$, then $f(\u)=\u\sim \overline{\v}=f(\v')$, in $MQ_k$.
\end{proof}
For example, the middle cube graph $MQ_2$ contains vertices with one
or two $1$s in their binary representation. The adjacencies give simply a
6-cycle (see Fig.~\ref{fig.3midcub}), which is isomorphic to
$\widetilde{O}_2$. As another example, $MQ_3$ has 20 vertices
because there are ${5\choose 2} =10$ vertices with two $1$s, and
${5\choose3} =10$ vertices with three $1$s in their binary
representation (see Fig.~\ref{fig.5midcub}).
Compare the Figs.~\ref{fig.dcPetersen} and~\ref{fig.5midcub} in
order to realize the isomorphism between the definitions of $MQ_3$
and $\widetilde{O}_3$.

It is known that $\widetilde{O}_k$ is a bipartite 2-antipodal
distance-regular graph. See Biggs~\cite{Bi93} and
Brower {\em et al.}~\cite{BrCoNe89} for more details.

\begin{figure}[t]
\centering
\includegraphics[scale=0.5]{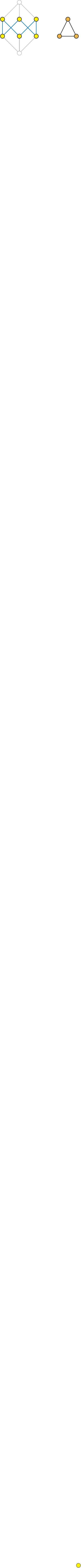}
\caption{The middle cube graph $MQ_2$ as a subgraph of $Q_3$ or as
the bipartite double graph of $O_2=K_3$.} \label{fig.3midcub}
\end{figure}

\begin{figure}[t]
\centering
\includegraphics[scale=0.5]{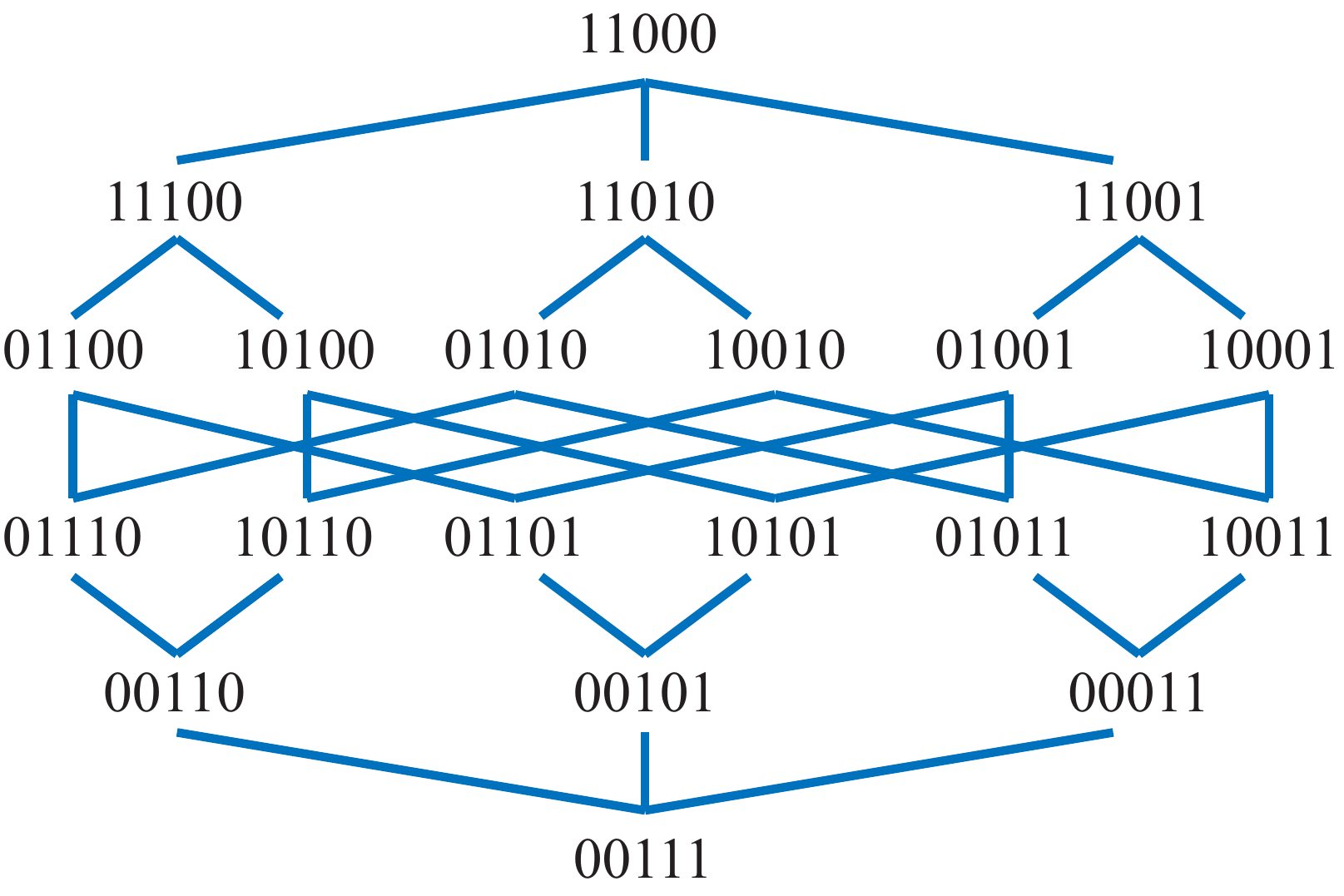}
\caption{The middle cube graph $MQ_3$.} \label{fig.5midcub}
\end{figure}

%
%
\subsection{Spectral properties}

The spectrum of the hypercube $Q_{2k-1}$ contains all the
eigenvalues (including multiplicities) of the middle cube $MQ_k$:
$$
\sp MQ_k\subseteq \sp Q_{2k-1}.
$$

According to the result of Corollary~\ref{prop.dc.espectrum}, the
spectrum of the middle cube graph $MQ_k\simeq\widetilde{O}_k$ can be
obtained from the spectrum of the odd graph $O_{k}$. The distinct
eigenvalues of $MQ_k$ are $\theta_i^+=(-1)^i(k-i)$ and
$\theta_i^-=-\theta_i^+$, $0\le i \le k-1$, with multiplicities
\begin{equation}
\label{mult_vaps_micu}
m(\theta_i^+)=m(\theta_i^-)=\frac{k-i}{k}
{{2k} \choose i}.
\end{equation}

For example,
\begin{eqnarray*}
\sp MQ_3 &=& \{ \pm 2,\pm 1^2 \},\\
\sp MQ_5 &=& \{\pm 3,\pm 2^4, \pm 1^5 \},\\
\sp MQ_7 &=& \{\pm 4,\pm 3^6,\pm 2^{14}, \pm 1^{14}\},\\
\sp MQ_9 &=& \{\pm 5, \pm 4^8, \pm 3^{27},\pm 2^{48}, \pm 1^{42}\}.\\
\end{eqnarray*}
The middle cube graph is a distance-regular graph. For instance, the
distance polynomials of $MQ_k$ are
\begin{eqnarray*}
p_0(x) &=& 1,\\
p_1(x) &=& x,\\
p_2(x) &=& x^2-3,\\
p_3(x) &=& \frac{1}{2}(x^3-5x),\\
p_4(x) &=& \frac{1}{4}(x^4-9x^2+12),\\
p_5(x) &=& \frac{1}{12}(x^5-11x^3+22x).
\end{eqnarray*}

As the sum of the distance polynomials is the Hoffman polynomial~\cite{Ho63}, we
have
\begin{equation}
\sum_{i=0}^5 p_i(x)=\frac{1}{12}(x-1)(x-2)(x+3)(x+2)(x+1).
\label{Eq.Hoffmann}
\end{equation}

\noindent The eigenvalues of the $MQ_3$ are $\lambda_0=3$ and the
zeroes of polynomial (\ref{Eq.Hoffmann}):
$$
\ev MQ_3=\{ 3,2,1,-1,-2,-3\},
$$
and their multiplicities, $m(\lambda_{i})$, can be computed
using the highest degree polynomial $p_{2k-1}$, according to the
result by Fiol~\cite{Fi02}:
$$
m(\lambda_{i})=\frac{\phi_0 p_{2k-1}(\lambda_0)}{\phi_i
p_{2k-1}(\lambda_i)}, \qquad 0 \le i \le {2k-1},
$$
where $\phi_i= \prod_{j=0, \,j\ne i}^{2k-1}(\lambda_i-\lambda_j)$.
Of course, this expression yields the same result as
Eq.~(\ref{mult_vaps_micu}). Namely, $m(\lambda_{i}) =
m(\lambda_{2k-1-i}) = m(\theta_{i}^{\pm})$, $0\leq i\leq k-1$.

The values of the highest degree polynomial are
$p_5(3)=p_5(1)=p_5(-1)=1$ and $p_5(2)=p_5(-1)=p_5(-3)=-1$. Moreover,
$\phi_0=-\phi_5=240$, $\phi_1=-\phi_4=-60$, and $\phi_2=-\phi_3=48$.
Then,
$$
\begin{array}{c}
m(\lambda_{0}) = m(\lambda_{5}) = m(\theta_{0}^{\pm}) = 1, \\
m(\lambda_{1}) = m(\lambda_{4}) = m(\theta_{1}^{\pm}) =4, \\
m(\lambda_{2}) = m(\lambda_{3}) = m(\theta_{2}^{\pm})= 5.
\end{array}
$$

%
%
\subsection{Middle cube graphs as boundary graphs}

Let $G$ be a graph with diameter $D$ and distinct eigenvalues $\ev
G=\{\lambda_0,\lambda_1,\ldots,\lambda_d\}$, where
$\lambda_0>\lambda_1>\dots>\lambda_d$. A classical result states
that $D\leq d$ (see, for instance, Biggs~\cite{Bi93}). Other results
related to the diameter $D$ and some (or all) different eigenvalues
have been given by Alon and Milman~\cite{AlMi85}, Chung~\cite{Ch89},
van Dam and Haemers~\cite{vDHa95}, Delorme and Sol\'{e}~\cite{DeSo91},
and Mohar~\cite{Mo91}, among others. Fiol {\em et
al.}~\cite{FiGaYe96,FiGaYe01,FiGaYe98b} showed that many of these
results can be stated with the following common framework: If the
value of a certain polynomial $P$ at $\lambda_0$ is large enough,
then the diameter is at most the degree of $P$. More precisely, it
was shown that optimal results arise when $P$ is the so-called {\em
$k$-alternating polynomial}, which in the case of degree $d-1$ is
characterized by $P(\lambda_i)=(-1)^{i+1}, 1\le i \le d$, and
satisfies $P(\lambda_0)=\sum_{i=1}^{d}\frac{\pi_0}{\pi_i}$, where
$\pi_i=\prod_{j=0, j\ne i }^{d}|\lambda_i-\lambda_j|$. In
particular, when $G$ is a regular graph on $n$ vertices, the
following implication holds:
$$
P(\lambda_0)+1= \sum_{i=0}^n\frac{\pi_0}{\pi_i}>n \quad\Rightarrow\quad D \le
d-1.
$$
This result suggested the study of the so-called
{\em boundary graphs}~\cite{FiGaYe98a,FiGaYe98b}, characterized by
\begin{equation}
\label{boundary}
\sum_{i=1}^{d}\frac{\pi_0}{\pi_i} = n.
\end{equation}
Fiol {\em et al.}~\cite{FiGaYe98a} showed that extremal ($D = d$)
boundary graphs, where each vertex has maximum eccentricity, are
2-antipodal distance-regular graphs. As we show in the next result,
this is the case of the middle cube graphs $MQ_{k}$ where the
antipodal pairs of vertices are $(\x;\overline{\x})$, with
$\x=x_0x_1 \ldots x_{2k-1}$ and
$\overline{\x}=\overline{x}_0\,\overline{x}_1 \ldots
\overline{x}_{2k-1}$.

\begin{propo}
The middle cube graph $MQ_{k}$ is a boundary graph.
\end{propo}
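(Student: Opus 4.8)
The plan is to verify the defining identity (\ref{boundary}) directly from the spectrum computed in (\ref{mult_vaps_micu}). As a set, the eigenvalues of $MQ_k\simeq\widetilde{O}_k$ are $\pm1,\pm2,\dots,\pm k$, which I order decreasingly as $\lambda_0=k>\lambda_1=k-1>\cdots>\lambda_{k-1}=1>\lambda_k=-1>\cdots>\lambda_{2k-1}=-k$; thus $\lambda_0=k$ is the degree, $d=2k-1$, and $D=d$. The order is $n=2{2k-1\choose k}={2k\choose k}$, and by (\ref{mult_vaps_micu}) every eigenvalue of absolute value $k-i$ has multiplicity $\frac{k-i}{k}{2k\choose i}$, so $m(\lambda_i)=\frac{k-i}{k}{2k\choose i}$ for $0\le i\le k-1$. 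The heart of the argument is the pointwise identity $\pi_0/\pi_i=m(\lambda_i)$; granting it, summing over the whole spectrum gives $\sum_i\pi_0/\pi_i=\sum_i m(\lambda_i)=n$, which is precisely (\ref{boundary}).

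To prove this identity I would evaluate $\pi_i=\prod_{j\ne i}|\lambda_i-\lambda_j|$ in closed form. Fix $0\le i\le k-1$ and set $t=k-i=\lambda_i$. Splitting the product by the sign of the competing eigenvalue, the positive eigenvalues $\{1,\dots,k\}\setminus\{t\}$ contribute
$$
\prod_{j=1}^{t-1}(t-j)\cdot\prod_{j=t+1}^{k}(j-t)=(t-1)!\,(k-t)!,
$$
while the negative eigenvalues $-1,\dots,-k$ contribute $\prod_{j=1}^{k}(t+j)=(t+k)!/t!$. Hence
$$
\pi_i=(t-1)!\,(k-t)!\,\frac{(t+k)!}{t!}=\frac{(k-i-1)!\,i!\,(2k-i)!}{(k-i)!}=\frac{i!\,(2k-i)!}{k-i},
$$
and in particular $\pi_0=(k-1)!\,(2k)!/k!=(2k)!/k$. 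Therefore
$$
\frac{\pi_0}{\pi_i}=\frac{(2k)!/k}{i!\,(2k-i)!/(k-i)}=\frac{k-i}{k}{2k\choose i}=m(\lambda_i),
$$
which is the claimed identity for the $k$ largest eigenvalues.

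It remains to cover the negative eigenvalues and to sum. Because $MQ_k$ is bipartite, the map $\lambda\mapsto-\lambda$ permutes the spectrum, so $\pi_{2k-1-i}=\pi_i$ and the identity extends to $\pi_0/\pi_{2k-1-i}=m(\lambda_i)=m(\lambda_{2k-1-i})$. Adding the identity over all $2k$ eigenvalues then gives
$$
\sum_{i=0}^{2k-1}\frac{\pi_0}{\pi_i}=\sum_{i=0}^{2k-1}m(\lambda_i)=n,
$$
which is the boundary condition (\ref{boundary}); the antipodal pairs attaining $D=d$ are $(\x,\overline{\x})$, as stated before the proposition.

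The only delicate point is the factorial bookkeeping in the closed form for $\pi_i$ (the split into positive and negative eigenvalues and the cancellation $(k-i-1)!/(k-i)!=1/(k-i)$); after that the computation is a direct substitution. It is worth recording why the identity $\pi_0/\pi_i=m(\lambda_i)$ comes out so cleanly: writing $\phi_i=(-1)^i\pi_i$ in the multiplicity formula $m(\lambda_i)=\phi_0\,p_{2k-1}(\lambda_0)/(\phi_i\,p_{2k-1}(\lambda_i))$, the $2$-antipodal structure forces the top distance polynomial to take the values $p_{2k-1}(\lambda_i)=(-1)^i$ (its matrix $p_{2k-1}(\A)$ is the perfect matching $\x\leftrightarrow\overline{\x}$), so the two sign factors cancel and leave exactly $m(\lambda_i)=\pi_0/\pi_i$. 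This is the structural reason the middle cube sits on the boundary.
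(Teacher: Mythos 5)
Your proof is correct, and its core computation is the same as the paper's: you derive the closed form $\pi_i=i!\,(2k-i)!/(k-i)$ (identical to the paper's expression, including the symmetry $\pi_i=\pi_{2k-(i+1)}$) and observe that $\pi_0/\pi_i=\frac{k-i}{k}{2k\choose i}$ coincides with the multiplicities of Eq.~(\ref{mult_vaps_micu}). Where you genuinely diverge is in how you finish. The paper notes the coincidence with the multiplicities but does not exploit it: instead it evaluates $\sum_{i=0}^{2k-1}\pi_0/\pi_i$ head-on via the binomial identities of Eqs.~(\ref{eq.calcul0}) and (\ref{eq.calcul}), namely $\sum_{i=0}^{k-1}{2k\choose i}=2^{2k-1}-{2k-1\choose k}$ and $\sum_{i=1}^{k-1}\frac{i}{k}{2k\choose i}=2^{2k-1}-2{2k-1\choose k-1}$. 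You bypass all of that: once each term $\pi_0/\pi_i$ is identified with $m(\lambda_i)$, the sum over the whole spectrum is the sum of all multiplicities, which equals the order $n$ by definition of the spectrum --- no binomial manipulation needed. Your route is shorter and explains \emph{why} the boundary condition holds rather than merely verifying it; your closing remark (that $\phi_i=(-1)^i\pi_i$ and $p_{2k-1}(\lambda_i)=(-1)^i$ for a bipartite 2-antipodal distance-regular graph force $m(\lambda_i)=\pi_0/\pi_i$) makes this structural reason explicit, and is consistent with the paper's multiplicity formula $m(\lambda_i)=\phi_0p_{2k-1}(\lambda_0)/\bigl(\phi_ip_{2k-1}(\lambda_i)\bigr)$. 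The paper's computation, on the other hand, has the minor virtue of being self-contained: it never needs to invoke that Eq.~(\ref{mult_vaps_micu}) gives the true multiplicities, only the list of distinct eigenvalues.
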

\begin{proof}
Recall that the eigenvalues of $MQ_{k}$ are
$$
\ev MQ_{2k-1}=\{k,k-1,\ldots, 1,-1,\ldots,-k\},
$$
that is, $\lambda_i=k-i, \lambda_{k+i}=-(i+1)$, $0\le i<k$. Now,
according to Eq.~(\ref{boundary}), we have to prove that
$\sum_{i=0}^{2k-1} \frac{\pi_0}{\pi_i}= 2 {{2k-1}\choose{k}}$.
Computing $\pi_i$, for $0\le i\le 2k-1$, we get
$$
\pi_i=\frac{i!(2k-i)!}{k-i}=\pi_{2k-(i+1)}, \textrm{ for } 0\le i<k.
$$
This implies
$$
\frac{\pi_0}{\pi_i}=\frac{\pi_0}{\pi_{2k-(i+1)}}=\frac{(2k)!}{k} \frac{(k-i)}{i!\,(2k-i)!}=
\frac{k-i}{k}{{2k}\choose{i}}, \textrm{ for } 0\le i<k,
$$
giving exactly the multiplicities of the corresponding eigenvalues,
as found in Eq.~\ref{mult_vaps_micu}.
By summing up we get
\begin{equation}
\label{eq.calcul0} \sum_{i=0}^{2k-1} \frac{\pi_0}{\pi_i} =
2\sum_{i=0}^{k-1} \frac{\pi_0}{\pi_i} = 2\left(
\sum_{i=0}^{k-1}{{2k}\choose{i}}-
\sum_{i=1}^{k-1}\frac{i}{k}{{2k}\choose{i}}\right).
\end{equation}
But
\begin{eqnarray}
\label{eq.calcul} \sum_{i=0}^{k-1}{{2k}\choose{i}} =
\frac{1}{2}\left(2^{2k}-{{2k}\choose{k}}\right) =
2^{2k-1}-{{2k-1}\choose{k}},
\end{eqnarray}
and
\begin{equation*}
\sum_{i=1}^{k-1}\frac{i}{k}{{2k}\choose{i}} =
2\sum_{i=0}^{k-2}{{2k-1}\choose{i}} =
2^{2k-1}-2{{2k-1}\choose{k-1}},
\end{equation*}
where we have used Eq.~(\ref{eq.calcul}) changing $k$ by $k-1$.
Thus, replacing the above values in Eq.~(\ref{eq.calcul0}), we get
the result.
\end{proof}

\subsection*{Acknowledgements} Research supported by the
{\em Ministerio de Ciencia e Innovaci\'on}, Spain, and the
{\em European Regional Development Fund} under project MTM2011-28800-C02-01,
and the {\em Catalan Research Council} under project 2009SGR1387.


\end{document}